\theoremstyle{plain} 
\newtheorem{thm}{Theorem}[section]
\newtheorem{corollary}[thm]{Corollary}
\newtheorem{lemma}[thm]{Lemma}
\theoremstyle{remark}     
\newtheorem{remark}[thm]{Remark}
\renewcommand{\Re}{\operatorname{Re}}
\newcommand{\kf}[1]{{\color{magenta}{#1}}}
\begin{document}

\title{Spectral and dynamical invariants of Hecke triangle groups via transfer operators}
\author{Ksenia Fedosova}
\date{}
\maketitle
\begin{abstract}
In this paper, we consider Hecke triangle groups $\Gamma_w$ for $w>2$ and associated infinite-volume orbifolds $\Gamma_w \backslash \mathbb{H}$. We show that the 
Selberg zeta function $Z_{\Gamma_w}(s)$ can be approximated for 
$s \in \mathbb{C} \setminus \frac{1}{2}(1-2 \mathbb{N}_0)$  
by determinants of finite-dimensional matrices with an explicitly computed error term 
that decays exponentially as the matrix size increases. 
As an application, we evaluate the Hausdorff dimensions of Hecke triangle groups with high precision, 
explicitly compute the values of the corresponding Ruelle zeta functions at zero, and obtain estimates on orders of trivial zeroes of the Selberg zeta function. 
\end{abstract}
\section{Introduction}

The Hecke triangle group $\Gamma_w$ for $w>0$ is the subgroup of $\mathrm{PSL}_2(\mathbb{R})$ generated by the elements
\begin{equation}
T_w:=\left[\begin{array}{cc}
1 & w \\
0 & 1
\end{array}\right] \quad \text { and } \quad S:=\left[\begin{array}{cc}
0 & 1 \\
-1 & 0
\end{array}\right].    
\end{equation}
For admissible values of $w$, the group $\Gamma_w$ acts on the hyperbolic plane $   \mathbb{H} = \{ z \in \mathbb{C} \, : \, \operatorname{Im}(z)>0  \}$  with hyperbolic orbifold quotients. In particular, $\Gamma_1$ coincides with the modular group $\operatorname{PSL}_2(\mathbb{Z})$. However, for $w>2$, the case of interest of this article, the quotient $\Gamma_w \backslash \mathbb{H}$ is an infinite-volume orbifold with one cusp and one conical singularity.  We define the \textit{limit set} $\Lambda_w$ of $\Gamma_w$ to be the set of limit points of any fixed orbit of $\Gamma_w$. For $w>2$, $\Lambda_w$ is a fractal subset of $\mathbb{R} \cup \{ \infty \}$, and we define $\delta_w$ as its \textit{Hausdorff dimension}.

One of the goals of this article is to obtain $\delta_w$ with rigorously controlled high precision, achieved through approximation of the Selberg zeta function with an explicit exponentially decaying error bound. 
Such an approximation of the Selberg zeta function (extended to its meromorphic continuation domain) enables the \emph{explicit evaluation} of the Ruelle zeta function at $s=0$ using the transfer operator approach. 
To the best of the author’s knowledge, these computations represent the first instance where transfer operators are used to study Ruelle zeta values in a geometric setting. In addition, the approach allows us to derive  estimates on the orders of the trivial zeros of the Selberg zeta function.

For a hyperbolic orbifold $\Gamma \backslash \mathbb{H}$, the \textit{Selberg zeta function} is originally defined for $\text{Re}(s)$ sufficiently large by 
\begin{equation}\label{eq:sssppppp2p2p2pp2p2}
Z_{\Gamma}(s)=\prod_{[\gamma]} \prod_{k=0}^{\infty} (1- e^{-(s+k) \ell(\gamma)}),
\end{equation}
where $[\gamma]$ runs over conjugacy classes of primitive hyperbolic elements of $\Gamma$ and $\ell(\gamma)$ denotes the length of a closed geodesic in $\Gamma \backslash \mathbb{H}$ associated to $\gamma \in \Gamma$. In many settings, the Selberg zeta function admits a meromorphic continuation to the complex plane, and its zeros and poles encode spectral information of the orbifold -- specifically, the spectrum of the Laplace operator on $\Gamma \backslash \mathbb{H}$. For the group $\Gamma_w$ with $w>2$, the Selberg zeta function $Z_{\Gamma_w}(s)$ has exactly one zero in the half-plane $\Re(s) > \tfrac{1}{2}$, located at $s = \delta_w$. This zero corresponds to the \textit{base eigenvalue} $\delta_w(1 - \delta_w)$ of the Laplacian on $\Gamma_w \backslash \mathbb{H}$.

In his seminal paper \cite{mayer1991thermodynamic}, Mayer introduced a thermodynamical approach to the Selberg zeta function for the modular surface $\operatorname{PSL}_2(\mathbb{Z}) \backslash \mathbb{H}$. He defined a so-called \textit{transfer operator} $\mathcal{L}_{s} $ such that, for $\text{Re}(s)$ sufficiently large, the Selberg zeta function associated to the modular surface was realized as the Fredholm determinant of the operator $1- \mathcal{L}_s$,
\begin{equation}
Z_{\operatorname{PSL}_2(\mathbb{Z}) \backslash \mathbb{H}}(s)=\operatorname{det}(1-\mathcal{L}_{s}).
\end{equation}
Mayer then showed that $s \mapsto \mathcal{L}_{s}$ admits a meromorphic continuation to $\mathbb{C}$, that allowed to obtain the meromorphic continuation of the Selberg zeta function. This result was extended to other groups and different settings, see e.g. \cite{fedosova2020meromorphic, moller2013period, pohl2015thermodynamic, pohl2016symbolic, guillope2004selberg, pollicott1991some, pollicott1997remarkable } and references therein. In this paper, we analyze the extension of these results to Hecke triangle groups with  $w > 2$ and the  transfer operator $\mathcal{L}_s$ introduced in \cite{naud2018fractal}.

However, we introduce a slightly different approach from Mayer. Instead of studying the meromorphic continuation of the operator $\mathcal{L}_s$, we directly prove the meromorphic continuation of the determinant $\det(1-\mathcal{L}_s)$. To do this, we use a sequence of meromorphic functions $F_N(s)$, first introduced in \cite{soares2022hecke}, which (as we will show in Theorem~\ref{thm:main_theorem_1}) approximate $\det(1-\mathcal{L}_s)$ uniformly on compact subsets of $\mathbb{C} \setminus \tfrac{1}{2}(1-2\mathbb{N}_0)$ as $N \to \infty$. In \cite{soares2022hecke}, only the asymptotic convergence of $F_N(s)$ to the Selberg zeta function was established, and only in the restricted half-plane $\Re(s) > \tfrac{1}{2}$. In this paper, we extend the method to the full holomorphic domain $\mathbb{C} \setminus \tfrac{1}{2}(1-2\mathbb{N}_0)$, where the Selberg zeta function exhibits its most interesting behavior. Second, instead of merely asymptotic convergence, we provide an explicit and exponentially decaying error bound (Theorem~\ref{thm:main_theorem_1}).  This allows for high-precision computation of Hausdorff dimensions of the boundaries of Hecke triangle groups with the help of the bisection method, improving the three digits previously reported in \cite{sarnak2008spectrum} and \cite{soares2022hecke}  to at least fifty.

In order to formulate the main theorem, define
\begin{equation}\label{def:Cs}
\begin{split}
    C(s) = \sup_{n \in 2 \mathbb{N}_0}  |\zeta(2s+n)|, \quad s \in \mathbb{C} \setminus \frac{1}{2}(1-2\mathbb{N}_0),
\end{split}
\end{equation}
where $\zeta$ denotes the Riemann zeta function. 
For $s \in \mathbb{C} \setminus \frac{1}{2}(1-2 \mathbb{N}_0)$ and $N \in \mathbb{N}$, we define $F_N(s)$ to be a determinant of the $N \times N$ matrix $(1 - L(s))$,
\begin{equation}\label{eq:definition_FN}
F_N(s) = \det (1-L(s)),    
\end{equation}
where the $(i,j)$-th entry $\ell_{ij}(s)$ of $L(s)$ is equal to 
\begin{equation}\label{eq:elsijeven}
\ell_{ij}(s) = 
2     \sqrt{\frac{j+1}{i+1}} \cdot   \frac{\zeta(2 s+i+j)}{w^{2 s+i+j}}\binom{2 s+i+j-1}{i}  \quad \text{for } i + j \text{ even}, 
\end{equation}
and 
\begin{equation}\label{eq:elsijodd}
    \ell_{ij}(s) = 0 \quad \text{for } i + j \text{ odd}.
\end{equation}
We additionally introduce the functions $P_N(s)$ and $Q(s)$ defined for $s \in \mathbb{C} \setminus \frac{1}{2} ( 1 - 2\mathbb{N}_0) $, $w>2$ and $N \in \mathbb{N}$ as 
\begin{equation}\label{eq:PNs}
P_N(s) = \left(\frac{2}{w}\right)^N (N+1)^{1/2} \cdot  w C(s) \frac{  2^{  \lceil 2|s| \rceil    } }{w^{2 \Re(s)}}     \operatorname{Li}_{-\frac{1}{2}} \left(\frac{2}{w}\right)   \left[  1  +    \sqrt{2} \operatorname{Li}_{-\frac{1}{2}}\left( \frac{2}{w}\right) \right],
\end{equation}
and 
\begin{equation}\label{eq:QNs}
    Q(s) =  2 C(s)    \frac{w^{2 | s|-2 \Re(s) }}{(w-1)^{2 | s| -1}}  \cdot \operatorname{Li}_{-\frac{3}{2}}\left(\frac{1}{w-1}\right) ,
\end{equation}
where $\operatorname{Li}$ denotes the polylogarithm \cite[25.12.10]{NIST}. We note that for a fixed value of $s \in \mathbb{C} \setminus  \frac{1}{2} ( 1 - 2 \mathbb{N}_0) $ and $w>2$, we have 
\begin{equation}
    P_N(s) = O(N^{1/2} (w/2)^{-N}),  \quad N \to \infty. 
\end{equation}

One of the main results of this paper is the following:
\begin{thm}\label{thm:main_theorem_1}
For $s \in \mathbb{C} \setminus \frac{1}{2}(1-2\mathbb{N}_0)$ and $N \in \mathbb{N}$, let $F_N(s)$ be as in \eqref{eq:definition_FN}. The Selberg zeta function $Z_{\Gamma_w}(s)$ admits a meromorphic continuation to $\mathbb{C} \setminus \frac{1}{2}(1-2 \mathbb{N}_0)$, and in that domain, it can be  approximated by $F_N(s)$ with the error term that decays exponentially fast as $N \to \infty$:
    \begin{equation}\label{eq:approximation_of_selberg_zeta}
    \begin{split}
        |  Z_{ \Gamma_w}(s)  & - F_N(s) |  \le  P_N(s)   e^{P_N(s)+Q(s)+1 }
    \end{split}
    \end{equation}
    for $P_N(a)$ and $Q(s)$ as in \eqref{eq:PNs} and \eqref{eq:QNs}, respectively.
 \end{thm}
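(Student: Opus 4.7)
\bigskip
\noindent\textbf{Proof proposal.} The natural starting point is the transfer operator $\mathcal{L}_s$ of \cite{naud2018fractal}: for $\Re(s)$ sufficiently large it acts as a nuclear (in fact trace-class) operator on a suitable Bergman-type Hilbert space of holomorphic functions, and one has the Fredholm determinant identity $Z_{\Gamma_w}(s)=\det(1-\mathcal{L}_s)$. I would pick a normalized monomial basis on this space; the computation carried out in \cite{soares2022hecke} then identifies the matrix entries of $\mathcal{L}_s$ in this basis with precisely the entries $\ell_{ij}(s)$ in \eqref{eq:elsijeven}--\eqref{eq:elsijodd}, the vanishing on the odd sublattice being a parity reflection of the $S$-involution built into $\Gamma_w$. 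The $N\times N$ truncation of this infinite matrix is the matrix $L(s)$, so $F_N(s)$ is by definition $\det(1-P_N \mathcal{L}_s P_N)$, where $P_N$ is the orthogonal projection onto the first $N$ basis vectors.

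\medskip
\noindent The core of the proof is then the standard Fredholm determinant perturbation inequality: if $A,B$ are trace-class, then
\begin{equation*}
|\det(1-A-B)-\det(1-B)|\le \|A\|_1\,\exp(\|A\|_1+\|B\|_1+1).
\end{equation*}
I apply this with $B:=L(s)$ (viewed as an operator on the whole space via the embedding) and $A:=\mathcal{L}_s-L(s)$, giving
\begin{equation*}
|Z_{\Gamma_w}(s)-F_N(s)|\le \|\mathcal{L}_s-L(s)\|_1\,\exp(\|\mathcal{L}_s-L(s)\|_1+\|L(s)\|_1+1).
\end{equation*}
The target bound \eqref{eq:approximation_of_selberg_zeta} then follows once one proves the two trace-norm estimates
\begin{equation*}
\|L(s)\|_1 \le Q(s), \qquad \|\mathcal{L}_s-L(s)\|_1\le P_N(s).
\end{equation*}
To obtain these, I would use $\|M\|_1\le \sum_j \|Me_j\|_2$ for an orthonormal basis $\{e_j\}$, which reduces each bound to a sum over the explicit matrix entries $\ell_{ij}(s)$. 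The factor $\binom{2s+i+j-1}{i}$ is controlled uniformly by $2^{i+j}\cdot 2^{\lceil 2|s|\rceil}$ and $|\zeta(2s+i+j)|$ by $C(s)$, after which the remaining sums assemble into the polylogarithms $\mathrm{Li}_{-1/2}$ and $\mathrm{Li}_{-3/2}$ appearing in the definitions of $P_N(s)$ and $Q(s)$.

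\medskip
\noindent The main obstacle is getting the \emph{sharp} tail estimate $\|\mathcal{L}_s-L(s)\|_1\le P_N(s)$ with the correct $(w/2)^{-N}$ decay. The matrix entries split into three tail blocks (rows $\ge N$ and all columns; all rows and columns $\ge N$; minus the double-counted corner), and in each block one must separate a factor $w^{-(i+j)}$ — which after fixing one index $\ge N$ yields the geometric factor $(2/w)^N$ and the $(N+1)^{1/2}$ comes from the normalization weight $\sqrt{(j+1)/(i+1)}$ — from a remaining polylogarithmic sum. Care is also required because the elementary bound $\binom{2s+i+j-1}{i}\le 2^{i+j}\cdot 2^{\lceil 2|s|\rceil}$ must be uniform in $s$ on the whole slit plane $\mathbb{C}\setminus \tfrac{1}{2}(1-2\mathbb{N}_0)$, with the $s$-dependence absorbed into $C(s)$ and the prefactors of $P_N$ and $Q$.

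\medskip
\noindent Finally, meromorphic continuation is obtained for free from the estimate: each $F_N(s)$ is manifestly meromorphic on $\mathbb{C}\setminus\tfrac{1}{2}(1-2\mathbb{N}_0)$ (poles only from the $\zeta(2s+i+j)$ factors), and \eqref{eq:approximation_of_selberg_zeta} shows $F_N\to Z_{\Gamma_w}$ uniformly on compact subsets avoiding the exceptional points. Hence $Z_{\Gamma_w}$ extends meromorphically to this domain, completing the proof.
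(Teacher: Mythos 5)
Your overall skeleton matches the paper's: truncate the matrix of $\ell_{ij}(s)$, control $\|\cdot\|_1$ of the full operator and of the tail by exactly the binomial bound $\binom{2|s|+i+j-1}{i}\le 2^{\lceil 2|s|\rceil+i+j-1}$ and the zeta bound $C(s)$, and feed the two trace-norm estimates into the Fredholm-determinant perturbation inequality. The resulting bound has the right form (the paper uses the slightly sharper variant $|\det(1-A)-\det(1-B)|\le\|A-B\|_1\exp(\max\{\|A\|_1,\|B\|_1\}+1)$, but your version yields the same final expression $P_N e^{P_N+Q+1}$).

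There is, however, one genuine gap, and it is precisely the point the paper is most careful about. You apply the perturbation inequality with $A=\mathcal{L}_s-L(s)$ and then say the continuation of $Z_{\Gamma_w}$ is ``obtained for free from the estimate.'' This is circular on the slit plane $\mathbb{C}\setminus\tfrac12(1-2\mathbb{N}_0)$: for $\Re(s)\le 1/2$ the series \eqref{eq:definition_transfer_operator_as44f} defining $\mathcal{L}_s$ diverges, so neither $\mathcal{L}_s$ nor $Z_{\Gamma_w}(s)$ exists yet at the points where you invoke the inequality, and an inequality whose left-hand side contains $Z_{\Gamma_w}(s)$ cannot be used to \emph{construct} $Z_{\Gamma_w}(s)$. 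The repair is what the paper does: define an operator $\mathcal{W}(s)$ on all of the slit plane directly by declaring its matrix entries to be the (individually meromorphic) $\ell_{ij}(s)$, prove from your trace-norm computations that $\mathcal{W}(s)$ is trace class there with $\|\mathcal{W}(s)\|_1\le Q(s)$ and $\|\mathcal{W}(s)-\mathcal{W}_N(s)\|_1\le P_N(s)$, set $F(s)=\det(1-\mathcal{W}(s))$, deduce that $F$ is holomorphic as a locally uniform limit of the holomorphic functions $F_N$ (note: on the slit plane the $F_N$ are holomorphic, not merely meromorphic, since the poles of $\zeta(2s+i+j)$ with $i+j$ even lie exactly in the excluded set), and only then identify $F(s)=\det(1-\mathcal{L}_s)=Z_{\Gamma_w}(s)$ for $\Re(s)>1/2$ and conclude by uniqueness of analytic continuation. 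The paper even flags that the entrywise meromorphy of $\ell_{ij}(s)$ does not by itself make $\mathcal{W}(s)$ the operator-theoretic continuation of $\mathcal{L}_s$; the argument deliberately runs through determinants, not operators. A second, minor point: your inequality $\|M\|_1\le\sum_j\|Me_j\|$ (valid by writing $M$ as a sum of rank-one columns) is actually sharper than the paper's bound $\|M\|_1\le\sum_j(j+1)\|M\psi_j\|$ coming from the min-max principle; with your version the sums assemble into different polylogarithms than the $\operatorname{Li}_{-3/2}$ in $Q(s)$, which is harmless for the theorem (a smaller bound implies the stated one) but means your constants would not literally be \eqref{eq:PNs}--\eqref{eq:QNs}.
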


As a corollary of this theorem, we can obtain approximate values of the first eigenvalue of the Laplace operator of $\Gamma_w \setminus \mathbb{H}$;  see Section \ref{sec:haus_calc} and Table \ref{tab:hausdorff}. 
\begin{table}[h!]
\centering
\begin{tabular}{c|c}
$w$ & Hausdorff dimension of $\delta(\Gamma_w)$ \\ \hline 
$3$ & $0.75194008038202898753355087134612238565071248482239$ \\
$4$ & $0.68367105376320840963103084607448961221631125476496$ \\
$5$ & $0.64665638884984061955006624797665443932208623918330$ \\
$6$ & $0.62296896860108742758578970214133058127260612238989$ \\
$8$ &  $0.59395687467303202626541162773916197787885310359836$ \\
$10$ & $0.57660658272884532239298217889172324836908688431275$ \\
$16$ & $0.55011004182730371669178285114466116320309677135534$ \\
$40$ & $0.52182151093148260901879103287698690165007405447213$ \\
$100$ & $0.50927941737580653723736709527094585385489171074337$
\end{tabular}
\caption{Hausdorff dimensions corresponding to different $w$.}
\label{tab:hausdorff}
\end{table}

Additionally, we apply Theorem \ref{thm:main_theorem_1} to study the Ruelle zeta function defined for $\Re(s) > 1$ and $w>2$ as follows:
\begin{equation}\label{eq:def_Ruelle_zeta}
    R_{\Gamma_w}(s) = Z_{\Gamma_w}(s) / Z_{\Gamma_w}(s+1).
\end{equation}
From the meromorphic continuation of $Z_{\Gamma_w}(s)$ as a function  of $s \in \mathbb{C}$, we obtain a meromorphic continuation of $R_{\Gamma_w}(s)$ and can evaluate the latter at $s=0$. 
\begin{corollary}\label{sec:ruelle_hecke_triangle}
    For $w>2$, the Ruelle zeta $R_{\Gamma_w}(s)$ function, defined by \eqref{eq:def_Ruelle_zeta}, admits a meromorphic continuation to $s \in \mathbb{C}$ that is regular at $s=0$, and
    \begin{equation}
        R_{\Gamma_w}(0) = 2.
    \end{equation}
\end{corollary}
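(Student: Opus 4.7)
The plan is to use Theorem~\ref{thm:main_theorem_1} to reduce the computation to an elementary matrix identity. First, for meromorphy and regularity at $s=0$: Theorem~\ref{thm:main_theorem_1} yields the meromorphic continuation of $Z_{\Gamma_w}$ to $\mathbb{C}\setminus\tfrac{1}{2}(1-2\mathbb{N}_0)$, and both $s=0$ and $s=1$ lie outside this exceptional set. The excerpt already records that the unique zero of $Z_{\Gamma_w}$ in $\{\Re(s)>\tfrac{1}{2}\}$ is $s=\delta_w<1$, so $Z_{\Gamma_w}(1)\neq 0$; hence $R_{\Gamma_w}(s)=Z_{\Gamma_w}(s)/Z_{\Gamma_w}(s+1)$ is regular at $s=0$. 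Meromorphic extension of $R_{\Gamma_w}$ to all of $\mathbb{C}$ follows from the classical meromorphic continuation of the Selberg zeta function, i.e.\ from the fact that the apparent singularities of the transfer-operator representation in $\tfrac{1}{2}(1-2\mathbb{N}_0)$ are removable once one takes the ratio.

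The value $R_{\Gamma_w}(0)=2$ will follow from the exact identity
\begin{equation}\label{eq:plan_id}
F_{N+1}(0) \;=\; 2\, F_{N}(1), \qquad N \ge 0.
\end{equation}
Two observations prove \eqref{eq:plan_id}. First, specialising \eqref{eq:elsijeven} at $s=0$ gives $\ell_{00}(0)=2\zeta(0)=-1$, while for $i\ge 1$ one has $\ell_{i0}(0)=0$ because $\binom{i-1}{i}=0$. Thus in the $(N+1)\times(N+1)$ truncation the first column of $1-L(0)$ has entry $2$ on top and zeros below, and cofactor expansion gives $F_{N+1}(0)=2\det(M)$, where $M$ is the $N\times N$ principal minor obtained by striking the first row and column. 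Second, the binomial identity $\binom{i+j+1}{i+1}=\tfrac{j+1}{i+1}\binom{i+j+1}{i}$ combined with the square-root prefactor in \eqref{eq:elsijeven} yields, after a short calculation,
\[
\ell_{i+1,j+1}(0)\;=\;\frac{\sqrt{(j+1)(j+2)}}{\sqrt{(i+1)(i+2)}}\,\ell_{ij}(1),
\]
so that $M$ is conjugate to the $N\times N$ truncation of $1-L(1)$ via $D=\operatorname{diag}\bigl(\sqrt{(i+1)(i+2)}\bigr)$; hence $\det(M)=F_N(1)$, establishing \eqref{eq:plan_id}.

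Sending $N\to\infty$ in \eqref{eq:plan_id} and invoking Theorem~\ref{thm:main_theorem_1} at $s=0$ and $s=1$ yields $Z_{\Gamma_w}(0)=2\,Z_{\Gamma_w}(1)$; dividing by the nonzero $Z_{\Gamma_w}(1)$ gives $R_{\Gamma_w}(0)=2$. The main obstacle is the algebraic bookkeeping in the second observation above: keeping track of the $\sqrt{(j+1)/(i+1)}$ normalisation and the binomial shift in \eqref{eq:elsijeven} so as to recognise the shifted block of $L(0)$ as a diagonal conjugate of $L(1)$. All other steps are then formal consequences of Theorem~\ref{thm:main_theorem_1}.
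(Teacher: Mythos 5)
Your proof is correct and follows essentially the same route as the paper: establish the finite-rank identity $F_{N+1}(0)=2F_N(1)$ by noting that the first column of $1-L(0)$ collapses to $(2,0,\dots,0)^T$ and that the shifted block reproduces $1-L(1)$ up to a determinant-preserving transformation, then pass to the limit via Theorem~\ref{thm:main_theorem_1}. The only cosmetic difference is that the paper first conjugates away the $\sqrt{(j+1)/(i+1)}$ normalisation (working with $a_{ij}=\sqrt{(i+1)/(j+1)}\,\ell_{ij}$ so that the shifted block is literally the transpose of the $s=1$ matrix), whereas you keep the normalisation and exhibit the minor as a diagonal conjugate of $1-L(1)$; the two computations are equivalent.
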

The values of Ruelle zeta functions $R(s)$ at $s=0$ play a prominent role in the context of Fried’s conjecture (see \cite{fried1986analytic,  dyatlov2017ruelle, moscovici1991r, mueller2020fried, shen2017analytic, jorgenson2024functional, dang2020fried} and references therein). Previous works that evaluate $R(s)$ or its order of vanishing at $s=0$ in geometric contexts have typically relied on the Selberg trace formula or microlocal analysis and, as far as the author is aware, the transfer operator approach for the explicit evaluations of $R(0)$ was not previously used. 

As another corollary of Theorem \ref{thm:main_theorem_1}, we can study the trivial zeros of the Selberg zeta function and evaluate the order of their degrees:
 
\begin{corollary}
    \label{lem:skdjfjfjfjsllfksdjlkflskdflksdflksjdfjsdjfjfjjj}
 Let $w > 2$ and let $m \in \mathbb{N}$. The function $Z_{\Gamma_w}(s)$ has a zero at the point $s = -m$ with the degree at least $m$, but no bigger than $2m+1$.
\end{corollary}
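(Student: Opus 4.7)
The strategy is to combine the uniform approximation $F_N \to Z_{\Gamma_w}$ supplied by Theorem~\ref{thm:main_theorem_1} with a careful structural analysis of the matrix $L(s)$ at $s=-m$. Because $-m \in \mathbb{C}\setminus \tfrac{1}{2}(1-2\mathbb{N}_0)$, the theorem gives $F_N \to Z_{\Gamma_w}$ uniformly on a closed disk $D$ around $-m$ with exponentially small error; Cauchy's integral formula then upgrades this to uniform convergence of all derivatives on smaller disks. Consequently, $\operatorname{ord}_{-m} Z_{\Gamma_w} \ge k$ whenever $\operatorname{ord}_{-m} F_N \ge k$ for all large $N$, and $\operatorname{ord}_{-m} Z_{\Gamma_w} \le k$ whenever some fixed derivative $F_N^{(k)}(-m)$ with $k \le 2m+1$ stays bounded away from $0$.

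The crux is a structural analysis of $L(-m)$. Using $\zeta(-2k)=0$ for $k\ge 1$ and the vanishing $\binom{-2k-1}{i}=0$ for $i>2k$, one checks that $\ell_{ij}(-m)=0$ unless $i+j$ is even and either $i+j=2m$ or ($i+j\ge 2m+2$ and $j\ge 2m+1$). In the block decomposition of rows/columns into $\{0,\dots,2m\}$ and $\{2m+1,\dots,N-1\}$, this forces $L_{21}(-m)=0$ and makes the top-left $(2m+1)\times(2m+1)$ block $L_{11}(-m)$ antidiagonal with entries
\[
\ell_{i,2m-i}(-m)=(-1)^{i+1}\sqrt{(2m-i+1)/(i+1)}, \qquad i=0,\dots,2m.
\]
A direct calculation yields $L_{11}(-m)^2 = I_{2m+1}$, so $L_{11}(-m)$ is an involution with eigenvalues $\pm 1$, and the trace $\operatorname{tr} L_{11}(-m)\in\{-1,0\}$ (depending on the parity of $m$) shows that its $+1$-eigenspace has dimension at least $m$.

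For the lower bound I would use the Schur complement. Since every nonzero entry of $L_{21}(s)$ has a simple zero at $s=-m$ (from the binomial factor), $L_{21}(s)=(s+m)\tilde L_{21}(s)$ with $\tilde L_{21}$ holomorphic near $-m$. For $N$ large, $1-L_{22}(-m)$ is invertible (by a perturbation argument using the decay of the entries of $L$), so
\[
F_N(s)=\det(1-L_{22}(s))\cdot \det C(s),\qquad C(s)=1-L_{11}(s)-(s+m)L_{12}(s)(1-L_{22}(s))^{-1}\tilde L_{21}(s).
\]
Since $C(-m)=1-L_{11}(-m)$ has corank $\ge m$, the standard fact that $\operatorname{ord}_{s_0}\det A(s)\ge \operatorname{corank} A(s_0)$ gives $\operatorname{ord}_{-m} F_N\ge m$, and the transfer principle yields $\operatorname{ord}_{-m} Z_{\Gamma_w}\ge m$.

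For the upper bound, every entry of $C(s)$ has order at most $1$ at $s=-m$: diagonal and antidiagonal entries (except at $(m,m)$ when $m$ is odd) contribute order $0$, and every other entry has order exactly $1$ coming from a simple zero of $\zeta(2s+i+j)$, of the binomial $\binom{2s+i+j-1}{i}$, or from the explicit $(s+m)$-factor in the Schur correction. Writing $C(s)=C_0+(s+m)C_1+(s+m)^2 R(s)$ with $R$ holomorphic, one obtains $\det C(s)=\det(C_0+(s+m)C_1)+(s+m)^2 \tilde R(s)$, where the first summand is a polynomial of degree at most $2m+1$ in $(s+m)$. Because $\det C(s)\not\equiv 0$ (the limit $Z_{\Gamma_w}$ is nontrivial), the matrix pencil $C_0+\mu C_1$ is regular, so this polynomial has order at $s=-m$ in $[\operatorname{corank}(C_0),2m+1]$. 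The remaining step is to control the interference with the quadratic tail $(s+m)^2\tilde R(s)$, using the explicit formulas for the first Taylor coefficients of $\ell_{ij}(s)$ at $s=-m$ (involving $\zeta'(-2k)$, $\log w$, and derivatives of the binomials), to show that no cancellation pushes the order past $2m+1$. Passing to the limit $N\to\infty$ via the transfer principle then gives $\operatorname{ord}_{-m} Z_{\Gamma_w}\le 2m+1$.

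The lower bound is conceptually clean once the involution structure of $L_{11}(-m)$ is established; the hard part is the upper bound, specifically the rigorous control of the interplay between the linear pencil $\det(C_0+(s+m)C_1)$ and the higher-order Taylor tail of $C(s)$ to rule out accidental cancellations that would violate the $2m+1$ ceiling.
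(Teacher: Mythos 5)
Your structural analysis is exactly the one the paper uses: at $s=-m$ the truncated matrix becomes block upper-triangular with respect to the index split $\{0,\dots,2m\}\cup\{2m+1,\dots\}$, the corner block is antidiagonal and squares to the identity, and its $+1$-eigenspace forces $\operatorname{corank}(1-L_{11}(-m))\ge m$; combined with the uniform approximation of Theorem~\ref{thm:main_theorem_1} this gives the lower bound. (Minor slip: $\operatorname{tr}L_{11}(-m)=(-1)^{m+1}\in\{-1,+1\}$, not $\{-1,0\}$; the corank is $m$ for $m$ even and $m+1$ for $m$ odd, so your conclusion stands.) Your Schur-complement formulation of the lower bound is in fact cleaner than the paper's expansion. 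One loose end: the invertibility of $1-L_{22}(-m)$ for large $N$ does not follow from a generic ``decay of entries'' perturbation argument; the clean justification, which is the one the paper uses, is that this block is, up to diagonal conjugation and transposition, the matrix of $\mathcal{V}_{N-2m-1}(m+1)$, whose determinant converges to $Z_{\Gamma_w}(m+1)\neq 0$ because the Euler product \eqref{eq:sssppppp2p2p2pp2p2} converges and is nonvanishing there.

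The genuine gap is the upper bound, and you have correctly located but not closed it. Two of the intermediate claims you lean on are false as stated: (i) $\det C(s)\not\equiv 0$ does not imply that the pencil $C_0+\mu C_1$ is regular, since the quadratic tail can make $\det C$ nontrivial even when $\det(C_0+\mu C_1)\equiv 0$; and (ii) ``every entry vanishes to order at most $1$, hence the determinant vanishes to order at most $n$'' fails in general, e.g.
\begin{equation*}
A(s)=\begin{pmatrix} s & s\\ s & s+s^2\end{pmatrix},\qquad \det A(s)=s^3 .
\end{equation*}
So ruling out the cancellations between $\det\bigl(C_0+(s+m)C_1\bigr)$ and the $O\bigl((s+m)^2\bigr)$ tail is not a ``remaining step'' to be controlled later -- it is the entire content of the upper bound, and your proposal does not supply an argument for it. To be fair, the paper's own proof is also terse at precisely this point: it asserts that $\det(1-U(s))$ vanishes to order at most $2m+1$ after computing $\operatorname{rank}(1-U(0))$, without spelling out why higher-order cancellation cannot occur. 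But within your framework that assertion is exactly what must be proved, so as written your proposal establishes only the lower bound $m$ and leaves the $2m+1$ ceiling unproved.
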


\subsubsection*{Acknowledgments} 
The author is grateful to Kim Klinger-Logan and Jørgen Lye for helpful discussions and to Axel Sorge for assistance with calculations. KF is partially funded by the Deutsche Forschungsgemeinschaft (DFG, German Research Foundation) under Germany's Excellence Strategy EXC 2044-390685587, Mathematics M\"unster: Dynamics-Geometry-Structure.
\section{Preliminaries}

\subsection{Functional analysis}
In this section, we collect the necessary information about trace class operators and provide a crude estimate of their nuclear norms. Let $\mathscr{H}$ be a Hilbert space equipped with the inner product $\langle \cdot, \cdot \rangle$. Suppose that $A: \mathscr{H} \to \mathscr{H}$ is a compact operator defined via its action on an orthonormal basis $\{ \psi_j \}_{j \in \mathbb{N}_0}$ of $\mathscr{H}$:
\begin{equation}\label{def:action}
    \langle \psi_i, A \psi_j \rangle = \ell_{ij}.
    \end{equation}
Denote by $\mu_m(A)$, $m \in \mathbb{N}$, the singular values of $A$, arranged in decreasing order \cite[ p. 49, (2.1)]{gohberg2012traces}.  
They satisfy the min-max principle \cite[ p. 49,  (2.2)]{gohberg2012traces}: for $m \in \mathbb{N}$, we have 
\begin{equation}
   \mu_m(A)=\min _{\substack{V \subset \mathscr{H} \\ \operatorname{dim}(V)=m-1}} \max _{\psi \in V^{\perp}} \frac{\|A \psi\|}{\|\psi\|},
\end{equation}
where $\| \cdot \| $ denotes the operator norm.
To estimate $\mu_m(A)$ from above, consider an $(m-1)$-dimensional  subspace   $V$ of $\mathscr{H}$ spanned by $ \{ \psi_j \}_{j=0}^{m-2} $, then (fixing a minor misprint in \cite[(16)]{soares2022hecke})
\begin{equation}\label{eq:sing_value_est}
    \mu_m(A) \le \max_{ \psi \in  V^\perp }  \frac{\| A \psi \| }{ \| \psi \|} \le \sum_{j = m-1}^\infty  \|A \psi_j  \|.
\end{equation}
We recall that the trace norm of the operator $A$ is by definition equal to the sum of singular values of~$A$:
\begin{equation}\label{eq:trace_norm}
\| A \|_1 = \sum_{m=1}^\infty \mu_m(A).
\end{equation}
If $\| A \|_1 < \infty$, then $A$ is called a nuclear operator. 
We can evaluate the right hand side of \eqref{eq:trace_norm} as follows:
\begin{equation}\label{eq:crude_trace_estimate2}
\| A \|_1    \stackrel{\eqref{eq:sing_value_est}}{\le} \sum_{m=1}^\infty \sum_{j =m-1}^\infty \|A \psi_j \| = \sum_{j=0}^\infty (j+1) \| A \psi_j \| \le \sum_{i=0}^\infty \sum_{j=0}^\infty (j+1)  | \ell_{ij} |.
\end{equation}
If $A$ is of trace class, then so is $A^*$ \cite[p. 51, Corollary 2.2]{gohberg2012traces}, and given that $\| A^* \|_1 = \| A \|_1$, we obtain 
\begin{equation}\label{eq:crude_trace_estimate3}
\| A \|_1 \le     \sum_{i=0}^\infty \sum_{j=0}^\infty (j+1)  | \ell_{ji} | = \sum_{i=0}^\infty \sum_{j=0}^\infty (i+1)  | \ell_{i j} |.
\end{equation}
The estimates are in no way sharp; e.g., for an operator $A$ defined by \eqref{def:action} with  
\begin{equation}
    \ell_{ij} = \begin{cases}
        (j+1)^{-2}, \quad & i = j, \\
        0, \quad & \text{else},
    \end{cases}
\end{equation}
we have $A=|A|$ and hence $\| A\|_1 = \zeta(2) $. On the other hand,  we cannot even deduce from either \eqref{eq:crude_trace_estimate2} or~\eqref{eq:crude_trace_estimate3} that the operator $A$ is of trace class, because the bounds read $\| A\|_1 \le \infty$.

\subsection{Transfer operator}

In this section, we recall the construction of the transfer operator $\mathcal{L}_s$ from \cite{soares2022hecke}. 
Let $\mathbb{D}=\{|z|<1\}$, and define the Bergman space as 
\begin{equation}
H^2(\mathbb{D} ):=\{f: \mathbb{D} \rightarrow \mathbb{C} \text { holomorphic } \mid\|f\|<\infty\} 
\end{equation}
with $L^2$-norm given by $\|f\|^2:=\int_{\mathbb{D}} |f(z)|^2 \operatorname{dvol}(z)$. For $\text{Re}(s) > 1/2$, following \cite[(20)-(22)]{soares2022hecke}, we introduce  the \textit{transfer operator} $\mathcal{L}_{s}$ defined by its action on functions $f \in H^2(\mathbb{D} )$ as 
\begin{equation}\label{eq:definition_transfer_operator_as44f}
\mathcal{L}_{s} f(z)=\sum_{n \in \mathbb{Z} \backslash\{0\}} (|n|w)^{-2 s} e^{-2s \log(1+\frac{z}{n w})} f\left(- \frac{1}{z+nw} \right)  , \quad z \in \mathbb{D}.
\end{equation}
Then, \cite[Proposition 2.1]{soares2022hecke} implies that $\mathcal{L}_{s} $, defined in this way, is a self-map and is a nuclear operator. Moreover, for $\text{Re}(s) > 1/2$, the Selberg zeta function can be realized as the Fredholm determinant of the operator $1- \mathcal{L}_s$ \cite[(36)]{soares2022hecke}
\begin{equation}
Z_{\Gamma_w}(s)=\operatorname{det}(1-\mathcal{L}_{s}).
\end{equation}

\begin{remark}\label{rem:meromorphic_continuation}
The next step in \cite{naud2018fractal} as well as in  \cite{soares2022hecke}, which is standard in the literature, is to introduce a certain finite rank operator $\mathcal{F}_s$ that is holomorphic as a function of $s$, and a bounded operator $\Psi$ such that (see \cite[Section 2.3]{naud2018fractal} or \cite[(43)]{soares2022hecke})
\begin{equation}
\mathcal{L}_s = \mathcal{F}_s + \mathcal{L}_{s+1/2} \circ \Psi .
\end{equation}
From the expression above, if $\mathcal{L}_s$ defines a meromorphic function in a half-plane $\operatorname{Re}(s) > s_0$ for $s_0 \in \mathbb{R}$, then it also defines a meromorphic function in the larger domain $\operatorname{Re}(s) > s_0 - 1/2$. Iterating this argument  yields a meromorphic continuation of $\mathcal{L}_s$ to the entire complex plane (see, e.g., \cite[proof of Corollary 1.3]{soares2022hecke}). However, we will not rely on this result here. 
\end{remark}

In \cite[(26)]{soares2022hecke}, the author wrote an explicit representation of the operator $\mathcal{L}_s$ in an orthonormal basis $\{ \psi_j \}_{j \in \mathbb{N}_0 }$ for the  Bergman space $H^2(\mathbb{D})$,
\begin{equation}\label{eq:psidefddasds}
\psi_j(z)=\sqrt{\frac{j+1}{\pi}} z^j, \quad j \in \mathbb{N}_0,
\end{equation}
and wrote an explicit action of $\mathcal{L}_s$ in this basis \cite[Proposition 4.1]{soares2022hecke}:
\begin{equation}\label{def:mathcalLs}
\left\langle \psi_i, \mathcal{L}_s \psi_j \right\rangle=   \ell_{ij}(s), \quad \operatorname{Re}(s)>1 / 2,
\end{equation}
where $\ell_{ij}(s)$ is as in \eqref{eq:elsijeven} and \eqref{eq:elsijodd}.

Further, \cite{soares2022hecke} constructed a family of finite rank operators 
\begin{equation}
\operatorname{Pr}_N \circ \mathcal{L}_s \circ \operatorname{Pr}_N,
\end{equation}
where $\operatorname{Pr}_N$ denotes the projection onto $ \{ \psi_j \}_{j=0}^{N-1}$. As \cite{soares2022hecke} showed, such operators approximate~$\mathcal{L}_s$ exponentially fast   in the sense of trace norm for  $\operatorname{Re}(s)>1 / 2$.

\section{Proof of Theorem \ref{thm:main_theorem_1}}

For $i,j \in N_0$, each of $\ell_{i j}(s)$, defined by \eqref{eq:elsijeven} and \eqref{eq:elsijodd}, is either a meromorphic function in $s \in \mathbb{C}  $ with the only pole at $\frac{1-i-j}{2}$ for $i+j$ even, or a constant function for $i+j$ odd. Let $\psi_j$, $j \in \mathbb{N}_0$ be as in \eqref{eq:psidefddasds}. Let $\mathcal{W}(s)$ and $\mathcal{W}_N(s)$ the operators defined by their actions on $\{ \psi_j \}_{j \in \mathbb{N}_0 }$  as 
\begin{equation}\label{def:mathcalWs}
\left\langle \psi_i, \mathcal{W} (s) \psi_j \right\rangle=   \ell_{ij}(s), \quad s \in \mathbb{C} \setminus \frac{1}{2} (1-2 \mathbb{N}_0)
\end{equation}
and 
\begin{equation}\label{def:mathcalWs}
\left\langle \psi_i, \mathcal{W}_N (s) \psi_j \right\rangle= \delta_{0 \le i, j \le N-1} \cdot  \ell_{ij}(s), \quad s \in \mathbb{C} \setminus \frac{1}{2} (1-2 \mathbb{N}_0),
\end{equation}
where $\delta_{0 \le i, j \le N-1}$ is equal to one for $  0 \le i, j \le N-1 $ and zero otherwise. 

Note that meromorphy of $s \mapsto \ell_{ij}(s)$ is not enough to conclude that $\mathcal{W}(s)$ is a meromorphic family of operators and to imply that $\mathcal{W} (s)$ coincides with meromorphic continuation of $\mathcal{L}_s$ from Remark~\ref{rem:meromorphic_continuation}.

Before we proceed to the proof of Theorem \ref{thm:main_theorem_1}, we will need an additional lemma. We aim to prove that for $s \in \mathbb{C} \setminus \frac{1}{2}(1- 2 \mathbb{N}_0)$, the operator $\mathcal{W}(s)$ is of trace class, and that operators $\mathcal{W}_N(s)$ approximate $\mathcal{W}(s)$ exponentially fast in $N$ in the sense of trace norm. We follow the general route of  \cite[Lemma 4.2]{soares2022hecke}, but provide explicit estimates instead of asymptotic ones, perform evaluations for all $s \in \mathbb{C} \setminus \frac{1}{2}(1- 2 \mathbb{N}_0)$ instead of the region $\Re(s) > 1/2$ and show that the convergence is, in fact, uniform in $s$. 

\begin{lemma}\label{lemma:approxL} Let $s \in \mathbb{C} \setminus \frac{1}{2}(1- 2\mathbb{N}_0)$ and let $N \in \mathbb{N}_0$. Let $P_N(s)$ and $Q(s)$ be as in \eqref{eq:PNs} and \eqref{eq:QNs}, respectively. The operator $\mathcal{W}(s)$ is a trace class operator with 
\begin{equation}\label{eq:trace_norm_estimates}
    \| \mathcal{W}(s) \|_1  \le Q(s).
\end{equation}
Moreover, the operators $\mathcal{W}_N(s)$ approximate $\mathcal{W}(s)$ exponentially fast in the trace norm as a function of~$N$:
\begin{equation}\label{eq:trace_norm_of_difference}
\begin{split}
\left\| \mathcal{W}_N(s) - \mathcal{W}(s) \right\|_1 &\le P_N(s).
\end{split}
\end{equation}
\end{lemma}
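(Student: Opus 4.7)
My plan is to feed pointwise bounds on the matrix entries $|\ell_{ij}(s)|$ into the crude trace-norm estimates \eqref{eq:crude_trace_estimate2}--\eqref{eq:crude_trace_estimate3} and then evaluate the resulting double sums in closed form by means of generating-function identities. The starting ingredient, needed for both parts, is the uniform pointwise bound
\begin{equation*}
|\ell_{ij}(s)| \le 2\sqrt{\frac{j+1}{i+1}}\, C(s)\, w^{-2\Re(s)-i-j}\binom{2|s|+i+j-1}{i},
\end{equation*}
which follows from $|\zeta(2s+i+j)| \le C(s)$ combined with the factor-by-factor triangle inequality $|2s+j+k| \le 2|s|+j+k$ applied to the Pochhammer expansion of $\binom{2s+i+j-1}{i}$. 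This bound is insensitive to the sign of $\Re(s)$, which is what will eventually produce uniformity in $s$ over the entire domain $\mathbb{C} \setminus \tfrac12(1-2\mathbb{N}_0)$.

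For the trace-class bound \eqref{eq:trace_norm_estimates}, I would apply \eqref{eq:crude_trace_estimate2}, use $(i+1)^{-1/2} \le 1$ to peel off the $i$-sum, and invoke the negative-binomial identity $\sum_{i \ge 0}\binom{\alpha+i-1}{i}w^{-i} = \bigl(w/(w-1)\bigr)^{\alpha}$ with $\alpha = 2|s|+j$ (which converges because $w > 1$). The remaining series $\sum_{j \ge 0}(j+1)^{3/2}(w-1)^{-j}$ equals $(w-1)\operatorname{Li}_{-3/2}(1/(w-1))$, and collecting prefactors reproduces $Q(s)$ on the nose.

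For the approximation bound \eqref{eq:trace_norm_of_difference}, the negative-binomial identity does not expose the decay rate $(2/w)^N$, so I would further coarsen the binomial estimate via $\binom{2|s|+i+j-1}{i} \le \binom{\lceil 2|s|\rceil+i+j-1}{i} \le 2^{\lceil 2|s|\rceil+i+j-1}$, arriving at $|\ell_{ij}(s)| \le \sqrt{(j+1)/(i+1)}\, C(s)\, w^{-2\Re(s)}\, 2^{\lceil 2|s|\rceil}(2/w)^{i+j}$. I would then decompose $\mathcal{W}(s) - \mathcal{W}_N(s) = A + B$, with $A$ supported on $\{i \ge N\}$ and $B$ supported on $\{i < N,\,j \ge N\}$, and apply \eqref{eq:crude_trace_estimate3} to each piece. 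After factoring, the bounds reduce to products of the closed-form series $\sum_{k \ge 0}(k+1)^{1/2}(2/w)^k = (w/2)\operatorname{Li}_{-1/2}(2/w)$ and its tail, where the tail bound $\sum_{i \ge N}(i+1)^{1/2}(2/w)^i \le (2/w)^N(N+1)^{1/2}(w/2)\operatorname{Li}_{-1/2}(2/w)$, obtained from $(k+N+1)^{1/2} \le (N+1)^{1/2}(k+1)^{1/2}$, supplies the geometric decay $(2/w)^N(N+1)^{1/2}$. The two contributions from $A$ and $B$ then combine into the bracket $[1+\sqrt{2}\operatorname{Li}_{-1/2}(2/w)]$ in $P_N(s)$.

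The main obstacle I expect is the precise constant-tracking in the second part: matching $P_N(s)$ exactly is sensitive to which of \eqref{eq:crude_trace_estimate2}--\eqref{eq:crude_trace_estimate3} is applied to each of $A$ and $B$. A careless pairing either produces the wrong polylogarithm order (e.g.\ a spurious $\operatorname{Li}_{1/2}(2/w)$ from the inner sum $\sum_{i<N}(i+1)^{-1/2}(2/w)^i$) or forces a $(w-2)^{-1}$-type factor that would blow up as $w \searrow 2$, neither of which is acceptable in $P_N(s)$. Once the bookkeeping is arranged so that every inner sum converts to an $\operatorname{Li}_{-1/2}(2/w)$ or $\operatorname{Li}_{-3/2}(1/(w-1))$, both estimates follow by direct collection of terms.
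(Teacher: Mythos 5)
Your proposal is correct and follows essentially the same route as the paper: the same pointwise bound on $|\ell_{ij}(s)|$ via the factor-by-factor inequality $|\binom{2s+i+j-1}{i}|\le\binom{2|s|+i+j-1}{i}$, the estimate \eqref{eq:crude_trace_estimate2} combined with the negative-binomial and $\operatorname{Li}_{-3/2}$ identities for $Q(s)$, and \eqref{eq:crude_trace_estimate3} with the coarser bound $2^{\lceil 2|s|\rceil+i+j-1}$ and a two-region decomposition for the tail. The only deviation is your tail estimate $(k+N+1)^{1/2}\le(N+1)^{1/2}(k+1)^{1/2}$, which yields the factor $\tfrac{w}{2}\operatorname{Li}_{-1/2}\left(\tfrac{2}{w}\right)$ where the paper's estimate yields the bracket $\left[1+\sqrt{2}\operatorname{Li}_{-1/2}\left(\tfrac{2}{w}\right)\right]$; since $\tfrac{w}{2}\operatorname{Li}_{-1/2}\left(\tfrac{2}{w}\right)=\sum_{k\ge0}(k+1)^{1/2}(2/w)^{k}$ is termwise dominated by $1+\sum_{k\ge1}(2k)^{1/2}(2/w)^{k}$, your bound is in fact marginally sharper and still gives \eqref{eq:trace_norm_of_difference} (so the two contributions do not literally "combine into the bracket," but the resulting quantity is bounded by $P_N(s)$).
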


\begin{proof}[Proof of Lemma \ref{lemma:approxL}]
We begin the proof by analyzing the definition \eqref{eq:elsijeven} and \eqref{eq:elsijodd} of $\ell_{ij}(s)$. Let $s \in K$ for a compact subset $K$ of $\mathbb{C} \setminus \frac{1}{2}(1-2\mathbb{N}_0)$. Note that for $s \in \mathbb{C}$ and $i,j \in \mathbb{N}_0$, we have 
\begin{equation}
\left|     \binom{2s+i+j-1}{i} \right| \le      \binom{| 2s|+i+j-1}{i}.
\end{equation}
The inequality can be shown by comparing individual factors. 
To examine the contribution of the Riemann zeta function to $\ell_{ij}(s)$, let $n_0 = n_0(K)$ be the smallest natural number such that $2 \Re(s) + n_0>1$ for $s \in K$. Then for $n \in \mathbb{N}_0$, we have 
\begin{equation}
    |\zeta(2 s + n)| \le \max \{  \max_{0 \le n \le n_0} |\zeta(2 s + n)|,  \sup_{n > n_0} |\zeta(2 s + n)| \}.
\end{equation}
Moreover, for $n > n_0$, the argument $2 s + n$ is in domain of the convergence of the series defining the Riemann zeta function, and hence 
\begin{equation}
    \sup_{n > n_0} |\zeta(2 s + n)| \le \zeta( 2 \Re(s) + n_0 ),
\end{equation}
that implies that $C(s)$ is a function that is bounded uniformly on $s \in K$. 
Thus, we have 
\begin{equation}\label{eq:estlijsss}
|    \ell_{ij}(s) | \le 2 C(s) \sqrt{\frac{j+1}{i+1}} \binom{2 |s|+i+j-1}{i} \frac{1}{w^{2 \Re(s) + i + j}}.
\end{equation}
To show \eqref{eq:trace_norm_estimates}, we recall that by the binomial series expansion,
\begin{equation}\label{eq:oopjjjjjdf2}
\begin{split}
    \sum_{i=0}^\infty w^{-i} \binom{2 | s| + i+j  -1}{i}  & =  \left(\frac{w}{w-1}\right)^{2 | s| +j},
\end{split}
\end{equation}
and by the definition of the polylogarithm,
\begin{equation}\label{eq:oopjjjjjdf3}
        \sum _{j=0}^{\infty }   \frac{    (j+1)^{3/2}    }{(w-1)^j}      = (w-1)   \operatorname{Li}_{-\frac{3}{2}} \left(\frac{1}{w-1} \right).
\end{equation}
Thus, 
\begin{equation}\label{eq:oopjjjjjdf}
\begin{split}
 \left\| \mathcal{W} (s) \right\|_1  & \stackrel{\eqref{eq:crude_trace_estimate2}}{\le}  \sum_{i=0}^{\infty} \sum_{j=0}^
{\infty} (j+1) |\ell_{ij} (s) | \\
&\stackrel{\eqref{eq:estlijsss}}{\le} 2 C(s)  \sum_{i=0}^{\infty} \sum_{j=0}^
{\infty}  (j+1)^{3/2} \binom{2 |s|+i+j-1}{i} \frac{1}{w^{2 \Re(s) + i + j}} \\
&\stackrel{ \eqref{eq:oopjjjjjdf2}  }{\le} 2 C(s)  \sum_{j=0}^
{\infty}  (j+1)^{3/2} \frac{1}{w^{2 \Re(s) + j}} \left(\frac{w}{w-1}\right)^{2 | s| +j} \\
& \stackrel{\eqref{eq:oopjjjjjdf3}}{=} 2 C(s)  \frac{ w^{2 |s| - 2 \Re(s)} }{(w-1)^{2 |s|-1}}    \operatorname{Li}_{-\frac{3}{2}} \left(\frac{1}{w-1} \right), \\
\end{split}
\end{equation}
that finishes the proof of \eqref{eq:trace_norm_estimates}. 

Now, to bound the difference between $\mathcal{W}(s)$ and $\mathcal{W}_N(s)$, we want to use \eqref{eq:crude_trace_estimate3} and the definitions of the former to write 
\begin{equation}\label{eq:difference_between_two_oper_adddser}
\begin{split}
 \left\| \mathcal{W} (s)-\mathcal{W}_N (s)\right\|_1  & \le \left(  \sum_{j=N}^\infty \sum_{i=0}^\infty + \sum_{i=N}^{\infty} \sum_{j=0}^
{N-1}\right) (i+1) |\ell_{ij} (s)|.  \\
\end{split}
\end{equation}
\begin{remark}
The inequality \eqref{eq:crude_trace_estimate2} allows for an easier evaluation of $\left\| \mathcal{W} (s) \right\|_1$ than \eqref{eq:crude_trace_estimate3} because of the closed formula \eqref{eq:oopjjjjjdf2}. However, for the estimate of $\left\| \mathcal{W}_N (s)-\mathcal{W} (s) \right\|_1$, it appears that \eqref{eq:crude_trace_estimate3} is more appropriate; if we were to repeat the proof with \eqref{eq:crude_trace_estimate2}, the final estimate would have the order of decay of $N^{3/2} (2/w)^N$ instead of asymptotically sharper $N^{1/2} (2/w)^N$.    
\end{remark}
We use a cruder estimate for the binomial coefficients \cite[p. 1264]{soares2022hecke}
\begin{equation}
    \binom{2 | s|+i+j-1}{i} \le 2^{ \lceil 2 |s| \rceil +i+j-1 },
\end{equation}
from which we obtain 
\begin{equation}
        |\ell_{ij} (s)| \le  C(s) \frac{  2^{  \lceil 2|s| \rceil  + i + j  } }{w^{2 \Re(s)+ i+j}} \sqrt{\frac{j+1}{i+1}},
    \end{equation}
and thus 
\begin{equation}\label{eq:skjkdjdjdkkj1}
    \begin{split}
 \left\| \mathcal{W}_N(s) - \mathcal{W}(s) \right\|_1 & \le  C(s) \frac{  2^{  \lceil 2|s| \rceil    } }{w^{2 \Re(s)}}   \left(  \sum_{j=N}^\infty \sum_{i=0}^\infty + \sum_{i=N}^{\infty} \sum_{j=0}^
{N-1}\right)  (\frac{  2 }{w})^{   i + j  } \sqrt{  (i+1) (j+1)}  \\
 & \le  C(s) \frac{  2^{  \lceil 2|s| \rceil    } }{w^{2 \Re(s)}}   \left(  \sum_{j=N}^\infty \sum_{i=0}^\infty + \sum_{i=N}^{\infty} \sum_{j=0}^
{\infty}\right)  (\frac{  2 }{w})^{   i + j  } \sqrt{     (i+1)    (j+1) }  \\
& = 2  C(s) \frac{  2^{  \lceil 2|s| \rceil    } }{w^{2 \Re(s)}}    \sum_{j=N}^\infty \sum_{i=0}^\infty   (\frac{  2 }{w})^{   i + j  } \sqrt{           (i+1)  (j+1)    } . \\
    \end{split}
\end{equation}
By the definition of the polylogarithm, we have 
\begin{equation}\label{eq:skjkdjdjdkkj2}
\sum_{j=N}^\infty  \sum_{i=0}^\infty  \sqrt{      (i+1)   (j+1)    } ( \frac{2}{w} )^{i+j} = \frac{w}{2}  \operatorname{Li}_{-\frac{1}{2}} (\frac{2}{w})  \sum_{j=N}^\infty  (\frac{2}{w})^j \sqrt{j+1}.
\end{equation}
We evaluate the sum in the right hand side of the previous expression as follows:
\begin{equation}\label{eq:skjkdjdjdkkj3}
\begin{split}
\sum_{j=N}^\infty (\frac{2}{w})^j (j+1)^{1/2} & = \sum_{j'=0}^\infty (\frac{2}{w})^{j'+N} (j'+N+1)^{1/2} \\
&= (N+1)^{1/2} (\frac{2}{w})^{N} \sum_{j'=0}^\infty (\frac{2}{w})^{j'} (\frac{j'}{N+1}+1)^{1/2} \\
& \le (N+1)^{1/2} (\frac{2}{w})^{N}  \left( 1 +  \sum_{j'=1}^\infty (\frac{2}{w})^{j'} (2 j')^{1/2}  \right)  \\
& =(\frac{2}{w})^N (N+1)^{1/2} \left[  1  +    \sqrt{2} \text{Li}_{-\frac{1}{2}}( \frac{2}{w}) \right] . \\
\end{split}
\end{equation}
Combining \eqref{eq:skjkdjdjdkkj1}, \eqref{eq:skjkdjdjdkkj2} and \eqref{eq:skjkdjdjdkkj3}, we obtain 
\begin{equation}
\begin{split}
 \left\| \mathcal{W} (s)-\mathcal{W}_N (s)\right\|_1 
 \le (\frac{2}{w})^N (N+1)^{1/2} \cdot  w C(s) \frac{  2^{  \lceil 2|s| \rceil    } }{w^{2 \Re(s)}}     \operatorname{Li}_{-\frac{1}{2}} (\frac{2}{w})   \left[  1  +    \sqrt{2} \text{Li}_{-\frac{1}{2}}( \frac{2}{w}) \right]
\end{split}
\end{equation}
that finishes the proof.
\end{proof}

Now, we are ready to prove Theorem  \ref{thm:main_theorem_1}:
\begin{proof}[Proof of Theorem \ref{thm:main_theorem_1}]
By the lemma above, $\mathcal{W}(s)$ is a trace class operator, and we can define, for every $s \in \mathbb{C} \setminus \frac{1}{2} (1-2\mathbb{N}_0)$, the Fredholm determinant of $1-\mathcal{W}(s)$, 
\begin{equation}\label{eq:definition_F}
 F(s) = \operatorname{det} (1- \mathcal{W}(s) ).   
\end{equation}
 Note that 
\begin{equation}\label{eq:FN_definition}
    F_N(s) = \det(1 - \mathcal{W}_N(s)).
\end{equation}
Note that for each fixed $N \in \mathbb{N}_0$, the function $F_N(s)$ 
is holomorphic in \( s \in \mathbb{C} \setminus \frac{1}{2}(1 - 2\mathbb{N}_0) \) as a finite sums of products of finitely many holomorphic functions.
We have 
    \begin{equation}
    \begin{split}
        |   F_N(s) - F(s) | &=     | \det(1- \mathcal{W}(s) ) - \det(1- \mathcal{W}_N(s) ) |  \\
        &\le \| \mathcal{W}(s)  - \mathcal{W}_N(s) \|_1 \cdot \exp ( \max \{ \| \mathcal{W}(s) \|_1, \| \mathcal{W}_N(s) \| _1 \} + 1  ).
    \end{split}
    \end{equation}
    The first factor is evaluated in \eqref{eq:trace_norm_of_difference}. To evaluate the second factor, note 
    \begin{equation}
        \| \mathcal{W}_N(s) \| _1 \le        \| \mathcal{W}_N(s) \| _1 +  \| \mathcal{W}_N(s) - \mathcal{W}(s) \| _1
    \end{equation}     and hence 
\begin{equation}\label{estimate_on_F}
    |   F_N(s) - F(s) | \le \| \mathcal{W}_N(s) - \mathcal{W}(s) \|_1 \cdot \exp ( \| \mathcal{W}(s) \|_1  +  \| \mathcal{W}(s) - \mathcal{W}_N(s) \| _1 + 1  ).
\end{equation}
For $s \in K$,  \eqref{eq:trace_norm_of_difference} yields that the right hand side \eqref{estimate_on_F} is bounded by a constant depending on $K$.  Hence, the function $F(s)$ is a uniform limit of holomorphic functions, that implies by Morera's theorem its own holomorphy. By \cite[(36)]{soares2022hecke}, for $\Re(s)>1 / 2$ the function $F(s)$ coincides with the Selberg zeta function  $Z_{\Gamma_w} (s)$, that together with uniqueness of analytic continuation  finishes the proof. 
\end{proof}

\begin{remark}
The estimate \eqref{eq:approximation_of_selberg_zeta} is not good for large values of $\operatorname{Im}(s)$ -- specifically, it grows like $\exp (\exp (\operatorname{Im}(s) )))$. Consequently, to maintain accuracy, $N$ must be chosen of order $\exp (\operatorname{Im}(s))$.
    
    The bottleneck of the proof is in the fact that in  \eqref{estimate_on_F}, we need to find the exponent of $\| \mathcal{W}(s) \|_1 $, and the latter already exhibits exponential growth in $|\operatorname{Im}(s)|$.  However, other examples in the literature of similar estimates (that the author is aware of) use an estimate of singular values  that also depends on $\operatorname{Im}(s)$ exponentially; cf. \cite[Proposition 2.3]{naud2018fractal} or    \cite[discussion above Lemma 3.3]{guillope2004selberg}.
\end{remark}

\section{Numerical estimates of Hausdorff dimensions}\label{sec:haus_calc}

To find roots of $Z(s)$ that belong to the real line, we can use the bisection method. 

More precisely, note that $\zeta(s)$ is a real-valued function for $s \in \mathbb{R} \setminus \{ 1 \}$, that implies the function $F(s)$ is real-valued as well. To prove that there is a root that belongs to an interval $[s_1, s_2] \subset \mathbb{R}$ (not containing poles of $F(s)$), it suffices to show $F(s_1)\cdot F(s_2) <0$. To prove, in turn, that $F(s_1) < 0$, we write $
    F(s_1) = F_N(s_1) + (F(s_1)-F_N(s_1))$ and note it will be sufficient to find $N \in \mathbb{N}$ such that 
\begin{enumerate}[i)]
    \item $F_N(s_1) < 0$ and 
    \item $|F(s_1)  - F_N(s_1) | < |F_N(s_1)|$, that can be evaluated as the right hand side of \eqref{eq:approximation_of_selberg_zeta} where instead of $s$ we substitute $s_1$.
\end{enumerate}

For example, take $s_1 = 0.75$, $w = 3$. For $|F(s_1)-F_N(s_1)|$ to be of order $10^{-50}$,  it suffices to take $N=350$. For larger $w$, smaller $N$ will suffice: for example, let $s_1=0.55$ and  $w=8$, then for $|F(s_1)-F_N(s_1)|$ to be of order $10^{-50}$, it will suffice to take $N=100$. 

Now that we have the method of proving that in a given interval, there exists a zero of $Z(s)$, we can start with an interval where such a zero exists a priori; e.g., from \cite[Theorem 1.4]{soares2022hecke} and \cite[Theorem 6.1]{phillips1985laplacian}, the Selberg zeta function $Z_{\Gamma_w}(s)$ always has a zero between $1/2$ and $1$. Then, one improves the precision with the help of a bisection method. 

The result of the calculations, performed with the help of \cite{PARI2}, are present in Table \ref{tab:hausdorff}. Additionally, for $w=2 \pi$, the Hausdorff dimension can be approximated by the the reciprocal of the golden ratio with an approximate error of $3 \cdot 10^{-4}$ (though, of course, this is merely a coincidence).

\section{Proof of Corollary \ref{sec:ruelle_hecke_triangle} (Ruelle zeta function at zero)}

The idea of the proof is to approximate $R_{\Gamma_w}(0)$ by a ratio of determinants of finite matrices $1- \mathcal{V}_N(0)$ and $1- \mathcal{V}_{N-1}(1)^T$ (to be defined later). We will see that \(1 - \mathcal{V}_N(0)\) admits an upper block-diagonal structure, where one of the blocks coincides with the diagonal transpose \(1 - \mathcal{V}_{N-1}(1)^T\) of the other matrix.

For $s \in \mathbb{C} \setminus \frac{1}{2} (1- 2 \mathbb{N}_0)$ and  $i,j \in \mathbb{N}_0$, let 
\begin{equation}\label{eq:aij}
    a_{ij}(s) = \sqrt{\frac{i+1}{j+1}} \ell_{ij}(s) \stackrel{\eqref{eq:elsijeven}, \eqref{eq:elsijodd}}{=} 
    \begin{cases}
       2  \frac{\zeta(2 s+i+j)}{w^{2 s+i+j}}      \binom{2 s+i+j-1}{i}, &\quad i+j \text{ even},\\
       0, &\quad i+j \text{ odd}.
    \end{cases}
\end{equation}
For $N \in \mathbb{N}$, let  $\mathcal{V}_N(s)$ be defined similarly to \eqref{def:mathcalWs} with the only exception that we replace all occurances of $\ell_{ij}(s)$ by $a_{ij}(s)$. That is, the operator $\mathcal{V}_N(s)$ can be represented by an $(N \times N)$-matrix with the  $(i,j)$-th entry equal to $a_{ij}(s)$ instead of $\ell_{ij}(s)$. The respective determinants do not change, 
\begin{equation}\label{eq:VandWareSame}
\det (1 - \mathcal{V}_N(s)) = \det (1 - \mathcal{W}_N(s)).     
\end{equation}
Note for $i \in \mathbb{N}$, we have  $a_{i0}(0) = 0$ from the properties of binomial coefficients, thus every $(i,0)$-th entry of $\mathcal{V}_N(0)$ vanishes. Hence, the matrix $\mathcal{V}_{N}(0)$ is an upper-block diagonal matrix, where the first block is equal to an $(1 \times 1)$-matrix with the single entry 
\begin{equation}\label{eq:a00entry}
    a_{00}(0) =  2\zeta(0) = -1.
\end{equation} 
Additionally, from \eqref{eq:aij} and 
\begin{equation}
    \binom{(i+1)+(j+1)-1}{i+1} =    \binom{2+i+j-1}{j},
\end{equation}
one has 
\begin{equation}
    a_{i+1,j+1}(0) =     a_{ji}(1),
\end{equation}
thus the $(i+1,j+1)$-st entry of  $\mathcal{V}_{N}(0)$ coincides with the $(j,i)$-th entry of $\mathcal{V}_{N-1}(1)$, that together with \eqref{eq:a00entry} implies  
\begin{equation}
\mathcal{V}_{N}(0) \;=\; 
\begin{pmatrix}
-1 & \ldots \\
0 & \mathcal{V}_{N-1}(1)^{T}
\end{pmatrix}    .
\end{equation}
From this, we obtain an equality for the determinants,
    \begin{equation}
\det (1 - \mathcal{V}_N(0))=2 \det (1 - \mathcal{V}_{N-1}(1)),
    \end{equation}
    and, from \eqref{eq:VandWareSame} and  \eqref{eq:FN_definition}, we get 
    \begin{equation}
        F_{N}(0)=2 F_{N-1}(1).
    \end{equation}
By Theorem \ref{thm:main_theorem_1}, the Selberg zeta function $Z_{\Gamma_w}(s)$ is approximated by $F_N(s)$, that together with \eqref{eq:def_Ruelle_zeta} finishes the proof.

\begin{remark}
    It may at first appear that our result is not consistent with the works of \cite{benard2023twisted}, \cite{dyatlov2017ruelle}, or \cite{frahm2023twisted}. In those references, the behavior of the Ruelle zeta function at the origin is described in terms of the Euler characteristic $\chi(X)$ of the surface $X$, namely
    \begin{equation}\label{eq:Ruelle_aaaaa}
        R_X(s) \sim s^{-\chi(X)}, \quad s \to 0,
    \end{equation}
    that is not the case of the article. However, in our case, the Euler characteristic of the surface $\Gamma_w \backslash \mathbb{H}$ is not an integer, and a literal application of \eqref{eq:Ruelle_aaaaa} would suggest a non-integer exponent, hence a non-analytic behavior of the Ruelle zeta function at the origin.
\end{remark}

\section{Trivial zeroes of $Z_{\Gamma_w}(s)$}
An argument similar to the one in the previous section allows to evaluate the orders of trivial zeros of $Z_{\Gamma_w}(s)$. 
\begin{proof}[Proof of Corollary \ref{lem:skdjfjfjfjsllfksdjlkflskdflksdflksjdfjsdjfjfjjj}]
    For $N \in \mathbb{N}$, we let $\mathcal{V}_N(s)$ be as in the proof of Corollary~\ref{sec:ruelle_hecke_triangle}.  From the definition of binomial coefficients, we get 
    \begin{equation}
    a_{ij}(-m) = 0 \quad \text{ for }  0 \le  j  \le 2m \text{ and } \, i \ge 2m+1,
    \end{equation}
hence $\mathcal{V}_{N+2m+1}(-m)$ is a block matrix.   
Moreover, 
\begin{equation}\label{eq:skkslskdkkf}
    a_{i+m+1,j+m+1}(-m) =     a_{ji}(1+m), \quad i, j \ge 0.
\end{equation}
Note that the right hand side of \eqref{eq:skkslskdkkf} is an $(i,j)$-th entry of the transpose of $\mathcal{V}_N(1+m)$, and the left hand side of \eqref{eq:skkslskdkkf} is the $(i+m+1,j+m+1)$-th entry of $\mathcal{V}_{N+2m+1}(-m)$. Thus, the matrix $\mathcal{V}_{N+2m+1}(-m)$  splits in blocks as follows:
\begin{equation}\label{eq:aasppppppspapspsp3332}
\mathcal{V}_{N+2m+1}(-m) \;=\; 
\begin{pmatrix}
U(0) & \ldots \\
0 & \mathcal{V}_N(1+m)^T
\end{pmatrix}   
\end{equation}
where 
\begin{equation}
 U(s) = ( a_{ij}(-m+s) )_{i,j=0}^{2m}.   
\end{equation}
The analyticity of $a_{ij}(s)$ together with \eqref{eq:aasppppppspapspsp3332} implies that as $s \to 0$, 
\begin{equation}
\mathcal{V}_{N+2m+1}(-m+s) \;=\; 
\begin{pmatrix}
U(s)  & \ldots \\
V(s) & \mathcal{V}_N(1+m)^T + O(s)
\end{pmatrix} ,  
\end{equation}
where $V(s)$ is an $N \times (2m+1)$-dimensional matrix with entries of order $O(s)$ as $s \to 0$, 
thus 
\begin{equation}\label{wq:sdodsidifdfdjsoidfsodfsdfs}
\begin{split}
&    \det (1-\mathcal{V}_{N+2m+1}(-m+s)) \\
& \ \ \ \ \   \ \ \ \ \ \ \ \ \ \ \ \ \ \ =  \det (1-\mathcal{V}_{N}(1+m)) \cdot \det (1-U(s)) + O(s^N).
\end{split}
\end{equation}
From the properties of $a_{ij}(-m+s)$, the matrix $1 - U(0)$ has nonzero 
entries only on its diagonal and anti-diagonal. If $m$ is even, all positions on these two diagonals are nonzero; if $m$ is odd, the central entry at their intersection vanishes.
Moreover, for $j=0, 1, \ldots, m$, the $j$-th  and $(2m+1-j)$-th columns in $1-U(0)$ are linearly dependent. That implies $1-U(0)$ has rank $m+1$ for even $m$ and rank $m$ for odd $m$.  For example, for $m=1$, we get 
\begin{equation}
1-U(0) =     
\begin{pmatrix}
 1 & 0 & 1 \\
 0 & 0 & 0 \\
 1 & 0 & 1 \\
\end{pmatrix}
\end{equation}
is of rank $1$, and 
in  the case $m=2$,  we have 
\begin{equation}
1-U(0) = 
\begin{pmatrix}
 1 & 0 & 0 & 0 & 1 \\
 0 & 1 & 0 & -1 & 0 \\
 0 & 0 & 2 & 0 & 0 \\
 0 & -1 & 0 & 1 & 0 \\
 1 & 0 & 0 & 0 & 1 \\
\end{pmatrix}
\end{equation}
is of rank $3$.

Thus, $\det (1-U(s))$ has a zero at $s=0$ of degree between 
\begin{equation}
\dim ( 1- U(0)) - \operatorname{rank}(1-U(0)) \ge (2m+1)-(m+1)=m    
\end{equation}
and $2m+1$. On the other hand, as $N \to \infty$, the determinant $\det (1-\mathcal{V}_{N}(1+m))$  approximates $Z_{\Gamma_w}(m+1)$, and the latter is not equal to zero as a convergent product~\eqref{eq:sssppppp2p2p2pp2p2}. Thus for some sufficiently large  $N$,  $\det (1-\mathcal{V}_{N}(1+m)) \neq 0$.

Hence, for large values of $N$, $\det (1-\mathcal{V}_{N}(1+m)) \cdot \det (1- U(s))$ has a zero at $s=0$ of a degree between $m$ and $2m+1$. Together with \eqref{wq:sdodsidifdfdjsoidfsodfsdfs}, it finishes the proof.
\end{proof}

\begin{remark}\label{rem:strand}
The proof of the lemma above illustrates the limitation of approximating $Z_{\Gamma_w}(s)$ by $F_N(s)$ for $\Re(s) \ll 0$. Namely, that it shows that for $s=-m$ to be a zero of $F_N(s)$, we need to take $N$ to be at least equal to $m+1$, else $F_N(-m)$ will have the full rank, and $\det F_N(-m) \neq 0$.
\end{remark}

\bibliography{bib} 
\bibliographystyle{amsplain}

\end{document}